\documentclass[11pt]{article}
\usepackage{amsmath, amssymb, amsthm}
\usepackage{verbatim}
\usepackage{multicol}
\usepackage{enumerate}
\usepackage{comment}
\usepackage[none]{hyphenat}
\usepackage{hyperref}
\hypersetup{
	colorlinks=true,
	linkcolor=blue,
	filecolor=maghttps://www.overleaf.com/projectenta,      
	urlcolor=cyan,
	citecolor=blue
}
\usepackage{pgf}
\usepackage{tikz}
\usetikzlibrary{positioning,arrows,shapes,decorations.markings,decorations.pathreplacing,matrix,patterns}
\tikzstyle{vertex}=[circle,draw=black,fill=black,inner sep=0,minimum size=3pt,text=white,font=\footnotesize]
\usepackage{cleveref}

\date{}
\title{\vspace{-1cm} Flattening rank and its combinatorial applications}

\author{
David Munh\'a Correia\thanks{ETH Zurich, \emph{e-mail}: \textbf{\{david.munhacanascorreia,benjamin.sudakov,istvan.tomon\}@math.ethz.ch}.}
\and
Benny Sudakov\footnotemark[1]
\and
Istv\'an Tomon\footnotemark[1]
}

\oddsidemargin  0pt
\evensidemargin 0pt
\marginparwidth 40pt
\marginparsep 10pt
\topmargin 5pt
\headsep 10pt
\textheight 8.8in
\textwidth 6.6in

\theoremstyle{plain}
\newtheorem{theorem}{Theorem}

\newtheorem{claim}[theorem]{Claim}

\Crefname{theorem}{Theorem}{Theorems}
\Crefname{definition}{Definition}{Definitions}
\Crefname{corollary}{Corollary}{Corollaries}
\Crefname{claim}{Claim}{Claims}
\Crefname{lemma}{Lemma}{Lemmas}
\Crefname{conjecture}{Conjecture}{Conjectures}
\Crefname{problem}{Problem}{Problems}
\Crefname{prop}{Proposition}{Propositions}

\theoremstyle{definition}

\DeclareMathOperator{\frank}{frank}
\DeclareMathOperator{\mfrank}{mfrank}

\DeclareMathOperator{\supp}{supp}

\newcommand{\m}{\mathbf}

\begin{document}

\maketitle
\sloppy

\begin{abstract}
Given a $d$-dimensional tensor $T:A_1\times\dots\times A_d\rightarrow \mathbb{F}$ (where $\mathbb{F}$ is a field), the \emph{$i$-flattening rank} of $T$ is the rank of the matrix whose
rows are indexed by $A_{i}$, columns are indexed by $B_{i}=A_1\times\dots\times A_{i-1}\times A_{i+1}\times\dots\times A_{d}$ and whose entries are given by the corresponding values of $T$.
The \emph{max-flattening rank} of $T$ is defined as $\mfrank(T)=\max_{i\in [d]}\frank_{i}(T)$. A tensor $T:A^{d}\rightarrow\mathbb{F}$ is called semi-diagonal, if $T(a,\dots,a)\neq 0$ for every $a\in A$, and $T(a_{1},\dots,a_{d})=0$ for every $a_{1},\dots,a_{d}\in A$ that are all distinct. In this paper we prove that if $T:A^{d}\rightarrow\mathbb{F}$ is semi-diagonal, then $\mfrank(T)\geq \frac{|A|}{d-1}$, and this bound is the best possible.
    
We give several applications of this result, including a generalization of the celebrated Frankl-Wilson theorem on forbidden intersections. Also,  addressing a conjecture of Aharoni and Berger, we show that if the edges of an $r$-uniform multi-hypergraph $\mathcal{H}$ are colored with $z$ colors such that each colorclass is a matching of size $t$, then $\mathcal{H}$ contains a rainbow matching of size $t$ provided $z>(t-1)\binom{rt}{r}$. This improves previous results of Alon and Glebov, Sudakov and Szab\'o.
\end{abstract}

\section{Introduction}

A \emph{$d$-dimensional tensor} over a field $\mathbb{F}$ is a function $T:A_1\times\dots\times A_d\rightarrow \mathbb{F}$, where $A_1,\dots,A_d$ are finite sets. For $i\in [d]$, the \emph{$i$-flattening rank} of $T$, denoted by $\frank_{i}(T)$, is defined as follows. Let $B_{i}=A_1\times\dots\times A_{i-1}\times A_{i+1}\times\dots\times A_{d}$, and view $T$ as a matrix $M$ with rows indexed by $A_{i}$, and columns indexed by $B_{i}$. Then $\frank_{i}(T):=\mbox{rank}(M)$. Note that $\frank_{i}(T)=1$ if and only if $T\neq 0$, and there exist two functions $f:A_{i}\rightarrow \mathbb{F}$ and $g:B_i\rightarrow \mathbb{F}$ such that $T(a_1,\dots,a_d)=f(a_i)g(a_1,\dots,a_{i-1},a_{i+1},\dots,a_{d})$. Also, the $i$-flattening rank of $T$ is the minimum $r$ such that $T$ is the sum of $r$ tensors of $i$-flattening rank 1. Equivalently, $\frank_{i}(T)$ is the dimension of the vector space generated by the rows of $T$ in the $i$-th dimension. Define the \emph{max-flattening rank} of $T$ as $$\mfrank(T)=\max_{i\in [d]}\frank_{i}(T).$$

It is easy to see that the max-flattening rank and $i$-flattening rank satisfy the usual properties of rank. More precisely, they are subadditive, and if $T'$ is a subtensor of $T$, then $\frank_i(T')\leq \frank_i(T)$ and $\mfrank(T')\leq \mfrank(T)$. 
Here, $T':A_1'\times\dots\times A_{r}'\rightarrow\mathbb{F}$ is a \emph{subtensor} of $T:A_1\times\dots\times A_{r}\rightarrow\mathbb{F}$ if $A_{i}'\subset A_{i}$ for $i\in [r]$, and $T'(a_1,\dots,a_r)=T(a_1,\dots,a_r)$ for $(a_1,\dots,a_{r})\in A_{1}'\times\dots\times A_{r}'$.
Also, the usual notion of tensor rank is always an upper bound for the max-flattening rank. As a reminder, $T$ has \emph{tensor rank} 1 if there are $d$ function $f_1,\dots,f_d$ such that $T(a_1,\dots,a_d)=f_1(a_1)\dots f_d(a_d)$, and the tensor rank 
$trank(T)$ is the minimal $r$ such that $T$ is the sum of $r$ tensors of tensor rank 1.

In this paper, we are interested in combinatorial applications of the max-flattening rank. Note that one of the trivial, but important property of the matrix rank is that diagonal matrices have full rank. The analogue of this is also trivially true for the flattening rank: if $T:A^{d}\rightarrow \mathbb{F}$ is a diagonal tensor, that is, $T(a_{1},\dots,a_{d})\neq 0$ if and only if  $a_1=\dots=a_{d}$, then $\frank_{i}(T)=\mfrank(T)=|A|$ for $i\in [d]$. However, in certain applications this is not really what is needed, thus we would like to relax the notion of diagonality. 

Say that $d$-dimensional tensor $T:A^{d}\rightarrow \mathbb{F}$ is \emph{semi-diagonal} if the following holds: $T(a_1,\dots,a_{d})=0$ if $a_1,\dots,a_d$ are all distinct, and $T(a_1,\dots,a_{d})\neq 0$ if $a_1=\dots=a_d$. If $a_1,\dots,a_d$ are neither all equal or all distinct, then there is no restriction on $T(a_1,\dots,a_d)$. Our main technical result is the following lower bound on the rank of semi-diagonal tensors.

\begin{theorem}\label{thm:main}
Let $T:A^{d}\rightarrow\mathbb{F}$ be a $d$-dimensional semi-diagonal tensor. Then $$\mfrank(T)\geq \frac{|A|}{d-1}.$$
\end{theorem}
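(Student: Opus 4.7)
The plan is to assume $\mfrank(T) = r$ with $r_i := \frank_i(T) \leq r$ for each $i$, and show $|A| \leq (d-1)r$ by means of an iterated row-reconstruction identity whose terms get killed by semi-diagonality. For each direction $i$, the $i$-flattening matrix $M_i$ of $T$ has rank $r_i$, so I can choose $B_i \subseteq A$ with $|B_i| = r_i$ such that the rows of $M_i$ indexed by $B_i$ form a basis of its row space. This gives, for every $a \in A$, coefficients $\mu_{a,b}^{(i)}$ ($b \in B_i$) such that
\[
T(a_1,\ldots,a_{i-1},a,a_{i+1},\ldots,a_d) \;=\; \sum_{b \in B_i} \mu_{a,b}^{(i)}\, T(a_1,\ldots,a_{i-1},b,a_{i+1},\ldots,a_d)
\]
holds for every choice of the remaining coordinates.

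Suppose now that $B_1,\dots,B_{d-1}$ can be arranged to be pairwise disjoint; since $|B_i| = r_i \leq r$, their union has size at most $(d-1)r$, so if $|A| > (d-1)r$ I can pick $a \in A \setminus \bigcup_{i<d} B_i$. Iteratively applying the above identity in directions $1,2,\dots,d-1$ to the diagonal entry $T(a,\dots,a)$ yields
\[
T(a,\dots,a) \;=\; \sum_{b_1 \in B_1}\cdots\sum_{b_{d-1}\in B_{d-1}} \left(\prod_{i=1}^{d-1}\mu_{a,b_i}^{(i)}\right) T(b_1,\dots,b_{d-1},a).
\]
Because $a \notin \bigcup_{i<d} B_i$ we have $a \neq b_i$ for every $i$, and disjointness of the $B_i$'s gives $b_i \neq b_j$ for $i \neq j$. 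Thus every tuple $(b_1,\dots,b_{d-1},a)$ occurring in the sum has all distinct entries, so by semi-diagonality each $T(b_1,\dots,b_{d-1},a) = 0$. The right-hand side vanishes, contradicting $T(a,\dots,a) \neq 0$; this forces $|A| \leq (d-1)r$.

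The hard part will be arranging $B_1,\dots,B_{d-1}$ to be pairwise disjoint. The $B_i$'s are bases of the linear matroids $\mathcal{M}_i$ on $A$ coming from linear independence of rows of the $i$-flattening, and their joint disjointness is governed by Edmonds' matroid union theorem: disjoint bases of total size $\sum_{i<d} r_i$ exist if and only if $|A \setminus U| + \sum_{i<d}\mathrm{rank}_i(U) \geq \sum_{i<d} r_i$ for every $U \subseteq A$. If this fails for some $U$, then the subtensor $T|_{U^d}$ is still semi-diagonal with $\mfrank(T|_{U^d}) \leq \max_{i<d}\mathrm{rank}_i(U)$, and I expect to close the argument by induction on $|A|$, using the inductive bound on $|U|$ together with the Edmonds obstruction bounding $|A \setminus U|$. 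A cleaner alternative—which I would try first—is to show that the matroid union condition follows directly from semi-diagonality, exploiting the fact that each row of each flattening contains the nonzero diagonal entry $T(a,\dots,a)$ as a coordinate, which provides enough genericity in the $\mathcal{M}_i$ to admit disjoint bases whenever $|A| > \sum_{i<d} r_i$.
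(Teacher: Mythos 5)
Your first step is correct and genuinely different in mechanism from the paper's proof: the nested row-expansion identity is valid (each coefficient $\mu^{(i)}_{a,b}$ comes from one linear relation among rows of the $i$-th flattening and so does not depend on the remaining coordinates), so if pairwise disjoint spanning sets $B_1,\ldots,B_{d-1}$ for the first $d-1$ flattenings existed, every $a\notin B_1\cup\cdots\cup B_{d-1}$ would force $T(a,\ldots,a)=0$, giving $|A|\le r_1+\cdots+r_{d-1}$. The genuine gap is exactly at the step you defer. Your ``cleaner alternative'' is false: semi-diagonality does not imply Edmonds' condition. Take the paper's own example $T(a_1,\ldots,a_d)=1$ if $a_1=\cdots=a_{d-1}$ and $0$ otherwise: for $d\ge 3$ it is semi-diagonal, yet $\frank_i(T)=|A|$ for every $i\le d-1$, so each of these matroids has $A$ as its unique basis and no two bases can be disjoint; the same obstruction persists for every choice of $d-1$ of the $d$ directions. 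So the non-disjoint case is unavoidable and must carry real content.

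Your fallback induction, as sketched, does not close. The asserted bound $\mfrank(T|_{U^d})\le\max_{i<d}\mathrm{rank}_i(U)$ is unjustified, since it ignores the $d$-th flattening of the subtensor, which in general is controlled only by the product, not the maximum, of the other flattening ranks; and even granting it, the bookkeeping fails: from $|U|\le(d-1)\max_{i<d}\mathrm{rank}_i(U)$ and the Edmonds deficiency $|A\setminus U|<\sum_{i<d}\bigl(r_i-\mathrm{rank}_i(U)\bigr)$ you would need $(d-1)\max_{i<d}\mathrm{rank}_i(U)\le\sum_{i<d}\mathrm{rank}_i(U)$, which holds only when the restricted ranks are all equal. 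The repair is to strengthen the statement you induct on from a bound in terms of $\mfrank$ to a bound in terms of sums: prove by induction on $|A|$ that $\sum_{i\ne j}\frank_i(T)\ge|A|$ for every $j\in[d]$ (compare the paper, which proves $\sum_{i=1}^{d}\frank_i(T)\ge\frac{d}{d-1}|A|$). With that statement both branches work: if disjoint bases for the directions $i\ne j$ exist, your expansion shows they cover $A$, so $\sum_{i\ne j}r_i\ge|A|$; if not, an Edmonds obstruction $U\subsetneq A$ satisfies $|A\setminus U|<\sum_{i\ne j}\bigl(r_i-\mathrm{rank}_i(U)\bigr)$, while induction applied to the semi-diagonal subtensor $T|_{U^d}$ gives $|U|\le\sum_{i\ne j}\frank_i(T|_{U^d})\le\sum_{i\ne j}\mathrm{rank}_i(U)$, and adding the two yields $|A|<\sum_{i\ne j}r_i$. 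The paper reaches its sum bound by a completely different, matroid-free route (induction on $|A|$ via a nonzero entry of maximal support and a connectivity argument on such tuples); your route can be made to work as above, but as written it is incomplete at precisely the disjointness step.
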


Let us make a few remarks about this theorem.  The bound $\mfrank(T)\geq \left\lceil\frac{|A|}{d-1}\right\rceil$ is the best possible for any positive integers $d\geq 2$ and $|A|$. Indeed, let $A_1,\dots,A_m$ be partition of $A$ into $m=\left\lceil\frac{|A|}{d-1}\right\rceil$ parts of size at most $d-1$, and define the tensor $T:A^d\rightarrow \mathbb{F}$ such that $$T(a_1,\dots,a_d)=\begin{cases} 1 &\mbox{if }a_1,\dots,a_d\in A_{i}\mbox{ for some }i\in [m]\\
    0 &\mbox{otherwise.}\end{cases}$$
Then $T$ is semi-diagonal, and the $i$-flattening rank of $T$ is exactly $m$ for $i\in [d]$. Moreover, the $i$-flattening rank of a semi-diagonal tensor need not be large for any fixed $i$. Indeed, if $T:A^{d}\rightarrow \mathbb{F}$ is defined as $$T(a_1,\dots,a_d)=\begin{cases} 1 &\mbox{if }a_1=\dots=a_{i-1}=a_{i+1}=\dots=a_{d}\\
0 &\mbox{otherwise.}\end{cases}$$
then $\frank_{i}(T)=1$. 

We will prove Theorem \ref{thm:main} in the next section and provide some of its combinatorial applications in Section \ref{sect:applications}. We also like to mention that further application of Theorem \ref{thm:main} appears in \cite{ST20}, where it is used  to establish certain Ramsey properties of algebraic hypergraphs. 

\section{Semi-diagonal tensors}

In this section, we prove Theorem \ref{thm:main}. More precisely, we prove the following theorem, which then immediately implies Theorem \ref{thm:main}.

\begin{theorem}
Let $T:A^{d}\rightarrow\mathbb{F}$ be a semi-diagonal tensor. Then $$\sum_{i=1}^{d}\frank_{i}(T)\geq \frac{d}{d-1}|A|.$$
\end{theorem}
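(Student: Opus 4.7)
The natural approach is induction on $|A|$. The base case $|A|\leq d-1$ is immediate, since each $\frank_i(T)\geq 1$ (as $T\neq 0$), giving $\sum_i \frank_i(T)\geq d\geq d|A|/(d-1)$. For the inductive step with $n:=|A|\geq d$, I would look for an element $a_0\in A$ such that the restriction $T':=T|_{(A\setminus\{a_0\})^d}$ satisfies $\sum_i \frank_i(T)\geq \sum_i \frank_i(T')+2$. Combining this with the inductive hypothesis $\sum_i \frank_i(T')\geq d(n-1)/(d-1)$ then yields $\sum_i \frank_i(T)\geq d(n-1)/(d-1)+2\geq dn/(d-1)$, using $d\geq 2$.

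The crux is thus to locate such a ``double-drop'' element $a_0$. My plan is to exploit the two complementary structural features of the flattening matrices forced by semi-diagonality. For each direction $i$, columns of $M_i$ indexed by $(d-1)$-tuples $(b_1,\dots,b_{d-1})$ of pairwise distinct elements have support contained in $\{b_1,\dots,b_{d-1}\}$ (all other entries are zero by the all-distinct condition), while the diagonal columns $(c,\dots,c)$ have nonzero $c$-th entry equal to $T(c,\dots,c)$. Playing these two features off each other across the $d$ directions, one should be able to argue that no single element $a$ can be simultaneously ``inessential'' in more than $d-2$ of the flattenings. Summed over $a\in A$, this would supply the required bound $\sum_i \frank_i(T)\geq dn/(d-1)$ via a double-counting/pigeonhole step on the pairs $(a,i)$ with $a$ essential in direction $i$.

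The main obstacle is making the ``double-drop'' claim precise, particularly because for small $|A|$ there exist configurations (e.g.\ $|A|=2$, $d=3$ with $T$ constant on all non-distinct tuples) in which no single element's removal drops \emph{any} $\frank_i$. The induction therefore requires a careful setup, likely either with a strengthened inductive hypothesis or with such small cases treated directly. An alternative approach that sidesteps these difficulties is to work with the map $\Psi:\mathbb{F}^A\to \bigoplus_{i=1}^d V_i$, $\alpha\mapsto (\sum_a \alpha_a T^i_a)_i$, where $V_i$ is the row span of $M_i$, and to use the semi-diagonal structure together with the nowhere-zero diagonal to exhibit $\lceil dn/(d-1)\rceil$ linearly independent vectors in $\bigoplus_i V_i$, certifying the lower bound on $\sum_i \dim V_i=\sum_i \frank_i(T)$ directly.
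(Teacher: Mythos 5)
The crux of your plan---finding a single element $a_0$ whose deletion drops $\sum_i \frank_i$ by at least $2$---is not merely delicate for small $|A|$; it is false in general, and it fails exactly on the extremal examples showing the theorem is tight. Take $d\geq 3$, partition $A$ into parts $A_1,\dots,A_m$ of size $d-1$, and let $T(a_1,\dots,a_d)=1$ if all coordinates lie in a common part and $0$ otherwise. This tensor is semi-diagonal and every flattening has rank exactly $m$ (rows indexed by elements of the same part coincide, and rows from different parts have disjoint supports). Deleting any single element leaves every part nonempty, so all $d$ flattening ranks are still $m$: the total drops by $0$, not $2$, for every choice of $a_0$, and $|A|=m(d-1)$ can be arbitrarily large. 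So the induction step as you set it up cannot be carried out, and the obstruction is not a small-case issue fixable by a strengthened hypothesis for tiny $|A|$. Your pigeonhole reformulation (``no element is inessential in more than $d-2$ directions'') breaks on the same example: there every row of every flattening lies in the span of the remaining rows, so no element is ``essential'' in any direction, yet the ranks are large; rank does not decompose elementwise in the way that counting over pairs $(a,i)$ would need.

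The paper's proof handles precisely this obstruction by deleting a \emph{set} $Y$ of up to $d-1$ elements at once rather than a single element. It chooses a nonzero entry $\mathbf{a}$ with $|\{\mathbf{a}\}|$ maximal, studies the connected component (under one-coordinate changes) of nonzero tuples with the same underlying set, extracts the set $J$ of ``good'' directions and a set $Y\subset\{\mathbf{a}\}$ with $1\leq |Y|\leq |J|-1$, and produces for each $j\in J$ a nonzero vector in the $j$-th row space supported inside $Y$; applying induction to $A\setminus Y$ then gives $\sum_i\frank_i(T)\geq |J|+\frac{d}{d-1}(|A|-|Y|)\geq \frac{d}{d-1}|A|$. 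In your extremal example this mechanism removes an entire part and gains one rank in each of the $d$ directions, which is exactly the balance needed. Your fallback suggestion of exhibiting $\lceil dn/(d-1)\rceil$ linearly independent vectors in $\bigoplus_i V_i$ is a restatement of the conclusion rather than an argument: constructing those vectors is the entire difficulty. So while your proposal shares the coarse skeleton of the paper's proof (induction on $|A|$, the same base case, and the observation that columns indexed by all-distinct tuples have constrained support), the key idea that makes the induction go through is missing, and the specific step you propose provably fails.
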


\begin{proof}
Let us introduce some notation. If $\m{v}\in \mathbb{F}^{A}$, let $\supp(\m{v})$ be the \emph{support} of $\m{v}$, that is, the set of elements $b\in A$ such that $\m{v}(b)\neq 0$. If $\m{a}\in A^{d}$, let $\{\m{a}\}=\{\m{a}(1),\dots,\m{a}(d)\}\subset A$ be the set of elements of $A$ which appear as coordinates of vector $\m{a}$. Let $\phi(\m{a})\subset [d]$ be the set of indices $i\in [d]$ such that $\m{a}(i)$ appears at least twice among $\m{a}(1),\dots,\m{a}(d)$. Also, for $i\in [d]$, let $\m{a}[i]\in \mathbb{F}^{A}$ be the vector defined as $$\m{a}[i](b)=T(\m{a}(1),\dots,\m{a}(i-1),b,\m{a}(i+1),\dots,\m{a}(d))$$
for $b\in A$. Finally, let $U_{i}(T)=\{\m{a}[i]:\m{a}\in A^{d}\}$, and let $V_{i}(T)$ be the subspace of $\mathbb{F}^{A}$ generated by the elements of $U_{i}(T)$. Then, by definition,  $$\frank_{i}(T)=\mbox{dim}(V_{i}(T)).$$

We prove the theorem by induction on $|A|$. If $|A|\leq d-1$, the statement is clearly true as the $i$-flattening is at least $1$ for $i\in [d]$, so let us assume that $|A|\geq d$. Choose $\m{a}\in A^{d}$ such that $T(\m{a})\neq 0$, and the set $\{\m{a}\}$ has maximal size. Then $|\{\m{a}\}|\leq d-1$ as $T$ is semi-diagonal.

\begin{claim}\label{claim:supp1}
If $i\in \phi(\m{a})$, then $\supp(\m{a}[i])\subset \{\m{a}\}$.
\end{claim}
\begin{proof}
Suppose this is not the case, and let $c\in \supp(\m{a}[i])\setminus \{\m{a}\}$. Let $\m{a'}$ be the $d$-tuple we get by replacing $\m{a}(i)$ with $c$ in $\m{a}$. Then $T(\m{a'})\neq 0$ and $\{\m{a'}\}=\{\m{a}\}\cup\{c\}$, contradicting the maximality of $|\{\m{a}\}|$.
\end{proof}

Let $\mathcal{A}$ be the set of all $d$-tuples $\m{b}\in A^d$ such that $T(\m{b})\neq 0$ and $\{\m{b}\}=\{\m{a}\}$. Define the graph $G$ on $\mathcal{A}$ as follows: connect $\m{b}$ and $\m{b'}$ by an edge if they differ in exactly one coordinate. Let $\mathcal{C}\subset \mathcal{A}$ be the connected component of $G$ containing $\m{a}$. Say that an index $j\in [d]$ is \emph{good} if there exists $\m{b}\in \mathcal{C}$ such that $j\in \phi(\m{b})$, and let $J\subset [d]$ be the set of good indices. 

\begin{claim}\label{claim:fix}
If $j$ is not good, then $\m{a}(j)=\m{b}(j)$ for every $\m{b}\in\mathcal{C}$.
\end{claim}
\begin{proof}
As $\m{a}$ and $\m{b}$ are in the same connected component, there exists a path from $\m{a}$ to $\m{b}$ in $G$, which means that there exists a sequence $\m{a}=\m{a}_0,\dots,\m{a}_p=\m{b}$ of elements of $\mathcal{A}$ such that $\m{a}_{k}$ and $\m{a}_{k+1}$ differ in exactly one coordinate for $k=0,\dots,p-1$, say in coordinate $j_{k}$. But as $\{\m{a}_{k}\}=\{\m{a}_{k+1}\}$, we have that $\m{a}_{k}(j_{k})$ is equal to some other coordinate $\m{a}_{k}(j')$. Therefore, $j_{k}$ is good, and so the coordinate at $j$ was never changed.
\end{proof}

Let $X=\{\m{a}(j):j\in [d]\setminus J\}$. By definition of goodness, all $\m{a}(j)$ are distinct elements of $A$ and appear only once as coordinate of $\m{a}$. Thus
$|X|=d-|J|$. Also, let $Y=\{\m{a}\}\setminus X$, then 
$$1\leq |Y|=|\{\m{a}\}|-|X|\leq |J|-1.$$ 
For every $j\in J$, pick an element $\m{b}\in \mathcal{C}\subset \mathcal{A}$ such that $j\in\phi(\m{b})$, and define the vector $\m{v}_{j}=\m{b}[j]\in V_{j}(T)$. 

\begin{claim}
$\supp(\m{v}_{j})\subset Y$.
\end{claim}
\begin{proof}
As $j\in\phi(\m{b})$, we have by Claim \ref{claim:supp1} that $\supp(\m{v}_j)\subset \{\m{b}\}=\{\m{a}\}$. Since $Y=\{\m{a}\}\setminus X=\{\m{b}\}\setminus X$, if $\supp(\m{v}_{j})\not\subset Y$, then there exists $c\in X$ such that $c\in \supp(\m{v}_j)$. Then $c\neq \m{b}(j)$. Let $\m{b'}$ be the $d$-tuple we get after replacing $\m{b}(j)$ with $c$ in $\m{b}$. Since $c \in \{\m{b}\}$, we have that $\{\m{b'}\}=\{\m{b}\}=\{\m{a}\}$ and $\m{b'}$ differs from $\m{b}$ in exactly one coordinate.
Then $\m{b'}\in\mathcal{C}$ but $\m{b'}(j)\neq \m{b}(j)$, contradicting Claim \ref{claim:fix}.
\end{proof}

Let $A'=A\setminus Y$ and let $T'$ be the restriction of $T$ to $(A')^{d}$. Then $T'$ is an $(|A|-|Y|)$-sized semi-diagonal tensor, so by our induction hypothesis, we have $$\sum_{i=1}^{d}\mbox{dim}(V_{i}(T'))\geq \frac{d}{d-1}(|A|-|Y|).$$
However, note that for $j\in J$, the support of the vector $\m{v}_j$ is disjoint from $A'$. Let $\m{e}_1,\dots,\m{e}_r\in (A')^d$ such that the restriction of the vectors $\m{e}_1[j],\dots,\m{e}_r[j]$ to $A'$ is a basis of $V_{j}(T')$ (so $r=\mbox{dim}(V_j(T'))$), then the vectors $\m{v}_{j},\m{e}_{1}[j],\dots,\m{e}_{r}[j]$ are linearly independent in $\mathbb{F}^{A}$. Therefore, $$\mbox{dim}(V_j(T))\geq \mbox{dim}(V_j(T'))+1.$$ 
But then
$$\sum_{i=1}^{d}\mbox{dim}(V_{i}(T))\geq |J|+\sum_{i=1}^{d}\mbox{dim}(V_{i}(T'))\geq |J|+\frac{d}{d-1}(|A|-|Y|)\geq \frac{d}{d-1}|A|,$$
where the last inequality holds noting that $|Y|\leq |J|-1\leq d-1$.

\end{proof}

\section{Applications}\label{sect:applications}

\subsection{Oddtown}

A family $\mathcal{A}$ of subsets of an $n$-element set is an \emph{Oddtown} if $|A|$ is odd for every $A\in\mathcal{A}$, and $|A\cap B|$ is even for every $A,B\in\mathcal{A}, A\neq B$. It was proved famously by Berkelamp \cite{B69} that the size of an Oddtown on an $n$ element ground set is at most $n$.

The following generalization of this problem was considered by Vu \cite{V99}. A \emph{$d$-wise Oddtown} is
a family $\mathcal{A}$ of subsets of some ground set such that $|A|$ is odd for every $A\in\mathcal{A}$, and $|A_1\cap \dots\cap A_{d}|$ is even for every $A_1,\dots,A_{d}\in\mathcal{A}$ that are all distinct. Vu \cite{V99} proved that the size of a $d$-wise Oddtown on an $n$ element ground set is at most $(d-1)n$. In the case we allow repetitions in $\mathcal{A}$ this bound is also the best possible.

As our first application, we show that the cross-version of this result also holds with the same bound. A \emph{cross-$d$-wise Oddtown} is a family $\mathcal{A}$ such that  
every $A\in\mathcal{A}$ is an ordered $d$-tuple $A(1), \ldots, A(d)$ of subsets of the ground set (which do not need to be distinct) with the property that $|A(1)\cap\dots\cap A(d)|$ is odd for 
every $A\in\mathcal{A}$, and $|A_1(1)\cap \dots\cap A_{d}(d)|$ is even for every $A_1,\dots,A_{d}\in\mathcal{A}$ that are all distinct. Note that $d$-wise Oddtown is a special case of cross-$d$-wise Oddtown, where every $d$-tuple 
contains the same set $d$ times.

\begin{theorem}
If $\mathcal{A}$ is a cross-$d$-wise Oddtown on an $n$ element ground set, then $|\mathcal{A}|\leq (d-1)n$.
\end{theorem}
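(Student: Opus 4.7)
The plan is to encode the cross-Oddtown as a semi-diagonal tensor over $\mathbb{F}_2$ and then apply Theorem \ref{thm:main}. Specifically, I define the $d$-dimensional tensor $T:\mathcal{A}^{d}\rightarrow \mathbb{F}_{2}$ by
$$T(A_{1},\dots,A_{d})=|A_{1}(1)\cap A_{2}(2)\cap\dots\cap A_{d}(d)| \pmod 2.$$
The two cross-Oddtown hypotheses translate directly into the semi-diagonality of $T$: when $A_{1}=\dots=A_{d}$, the value is $|A_{1}(1)\cap\dots\cap A_{1}(d)|\pmod 2$, which is $1$ by the odd-intersection condition; and when $A_{1},\dots,A_{d}$ are pairwise distinct, the value vanishes by the even-intersection condition. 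Applying Theorem \ref{thm:main} then yields
$$\mfrank(T)\geq \frac{|\mathcal{A}|}{d-1}.$$

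The other half of the proof is to exhibit a matching upper bound $\mfrank(T)\leq n$. This comes from a standard low-rank expansion of the intersection parity: writing $\m{1}_{S}\in\mathbb{F}_{2}^{[n]}$ for the indicator vector of $S\subseteq[n]$,
$$T(A_{1},\dots,A_{d})=\sum_{j=1}^{n}\prod_{i=1}^{d}\m{1}_{A_{i}(i)}(j) \pmod 2.$$
Each term $\prod_{i=1}^{d}\m{1}_{A_{i}(i)}(j)$ is a product of functions each depending on only one of the $A_{i}$, hence is a rank-$1$ tensor. Therefore $\trank(T)\leq n$, and since the max-flattening rank is always bounded above by the tensor rank, $\mfrank(T)\leq n$.

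Combining the two inequalities gives $|\mathcal{A}|/(d-1)\leq n$, i.e.\ $|\mathcal{A}|\leq (d-1)n$, as desired. There is no real obstacle here: the only conceptual step is recognizing that the cross-Oddtown conditions are tailored exactly to make the parity tensor semi-diagonal, after which Theorem \ref{thm:main} does all the work. The only mild care needed is to make sure the characteristic-function expansion is written over $\mathbb{F}_{2}$ (so that intersection sizes become sums of products of $\{0,1\}$-indicators).
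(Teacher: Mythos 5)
Your proposal is correct and follows essentially the same route as the paper: the same parity tensor $T(A_1,\dots,A_d)=|A_1(1)\cap\dots\cap A_d(d)| \bmod 2$ over $\mathbb{F}_2$, the same observation that the cross-Oddtown conditions make it semi-diagonal so that Theorem \ref{thm:main} gives $\mfrank(T)\geq |\mathcal{A}|/(d-1)$, and the same decomposition $T(A_1,\dots,A_d)=\sum_{j=1}^{n}\prod_{i=1}^{d}\m{1}_{A_i(i)}(j)$ showing $\trank(T)\leq n$ and hence $\mfrank(T)\leq n$. No gaps; this matches the paper's argument.
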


\begin{proof}
Let $m=|\mathcal{A}|$. Define the tensor $T:\mathcal{A}^{d}\rightarrow \mathbb{F}_2$ such that for $A_{1},\dots,A_{d}\in\mathcal{A}$ we have $T(A_1,\dots,A_d)=|A_{1}(1)\cap\dots\cap A_{d}(d)|$. Then $\mathcal{A}$ is a semi-diagonal tensor and we have $\mfrank(T)\geq \frac{m}{d-1}$, by Theorem \ref{thm:main}. On the other hand, we show that $\mbox{trank}(T)\leq n$, which then implies $\mfrank(T)\leq n$. For $A\in\mathcal{A}$ and $j\in [d]$, let $v_{A(j)}:[n]\rightarrow \mathbb{F}_2$ be the characteristic function of $A(j)$. For $k\in [n]$, define the function $f_{j,k}:\mathcal{A}\rightarrow \mathbb{F}_2$ as $f_{j,k}(A)=v_{A(j)}(k)$. Then 
$$T(A_1,\dots,A_{d})=\sum_{k=1}^{n}f_{1,k}(A_1)\dots f_{d,k}(A_d).$$
Therefore, $\mbox{trank}(T)\leq n$.
\end{proof}

\subsection{Forbidden intersections}
Let $p$ be a prime, $L\subset \mathbb{F}_{p}$ be a set of residues $\hspace{-0.25cm} \mod p$, and $\mathcal{F}\subset 2^{[n]}$ be a family such that $|A|\not\in L$ for every $A\in\mathcal{F}$, but $|A\cap B|\in L$ for every distinct $A,B\in\mathcal{F}$. The celebrated Frankl-Wilson theorem \cite{FW81} on forbidden intersections says that
$$|\mathcal{F}|\leq \sum_{s=0}^{|L|}\binom{n}{s}.$$ 

A natural extension of this was given by Grolmusz and Sudakov \cite{GS02}, who proved if one requires that all $|A|\not\in L$ but every intersection of $k$ distinct sets in $\mathcal{F}$ has size in $L$, then $|\mathcal{F}|\leq (k-1)\sum_{s=0}^{|L|}\binom{n}{s}$. This bound is tight if we allow $\mathcal{F}$ to be a multiset. Here, we prove the following extension of these results for more general, forbidden configurations. A \emph{configuration of order $k$ modulo $p$} is a pair $(\mathcal{C},L)$, where $\mathcal{C}\subset 2^{[k]}$ and $L\subset \mathbb{F}_p$.  Say that a family $\mathcal{F}\subset 2^{[n]}$ is \emph{$(\mathcal{C},L)$-satisfying}, if $|A|\not\in L$ for every $A\in\mathcal{F}$, but there exist no $k$ distinct sets $A_{1},\dots,A_{k}\in\mathcal{F}$ such that $|\bigcap_{i\in X}A_{i}|\not\in L$ for every $X\in \mathcal{C}$.

Clearly, asking $\mathcal{F}$ to be $(\{\{1,2\}\},L)$-satisfying is equivalent to the condition of the Frankl-Wilson theorem and being  $(\{\{1, \ldots, k\}\},L)$-satisfying is equivalent to restricted $k$-wise intersections. We bound the size of the  maximal $(\mathcal{C},L)$-satisfying family by a function of $n$, $|L|$ and the maximum degree of $\mathcal{C}$, which we define next. 
Given a family $\mathcal{C}$,
the \emph{degree} of $a\in [k]$ in $\mathcal{C}$ is the number of sets in $\mathcal{C}$ containing $a$, and is denoted by $\mbox{deg}_{\mathcal{C}}(a)$. The \emph{maximum degree} of $\mathcal{C}$ is $\Delta(\mathcal{C})=\max_{a\in [k]}\mbox{deg}_{\mathcal{C}}(a)$.

\begin{theorem}\label{thm:fw}
    Let $(\mathcal{C},L)$ be a configuration of order $k$ modulo $p$, and let $\Delta=\Delta(\mathcal{C})$. If $\mathcal{F}\subset 2^{[n]}$ is $(\mathcal{C},L)$-satisfying, then $$|\mathcal{F}|\leq (k-1)\sum_{s=0}^{\Delta|L|}\binom{n}{s}.$$
\end{theorem}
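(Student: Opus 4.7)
The plan is to adapt the polynomial method used for Frankl--Wilson to the tensor setting, by constructing a semi-diagonal tensor $T:\mathcal{F}^{k}\to\mathbb{F}_p$ whose $\mfrank$ is lower bounded by Theorem \ref{thm:main} and upper bounded by a multilinear polynomial dimension count.

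First I would define
$$T(A_1,\ldots,A_k)=\prod_{X\in\mathcal{C}}\prod_{\ell\in L}\left(\Big|\bigcap_{i\in X}A_i\Big|-\ell\right)\in\mathbb{F}_p$$
for $A_1,\ldots,A_k\in\mathcal{F}$. On the diagonal, $T(A,\ldots,A)=\prod_{X\in\mathcal{C}}\prod_{\ell\in L}(|A|-\ell)$, which is nonzero in $\mathbb{F}_p$ because $|A|\not\in L$ for all $A\in\mathcal{F}$. If $A_1,\ldots,A_k$ are all distinct, the $(\mathcal{C},L)$-satisfying hypothesis yields some $X\in\mathcal{C}$ with $|\bigcap_{i\in X}A_i|\in L$, so the corresponding factor vanishes and $T(A_1,\ldots,A_k)=0$. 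Hence $T$ is semi-diagonal, and Theorem \ref{thm:main} gives $\mfrank(T)\geq |\mathcal{F}|/(k-1)$.

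Next, I would upper bound $\frank_i(T)$ for each $i\in[k]$. Writing $v_B\in\{0,1\}^n$ for the characteristic vector of $B\subset[n]$, we have $|\bigcap_{i\in X}A_i|=\sum_{j\in[n]}\prod_{i\in X}v_{A_i}(j)$ in $\mathbb{F}_p$, so $T$ becomes a polynomial in the $nk$ variables $\{v_{A_i}(j)\}$. In the $n$ variables of $v_{A_i}$, only the factors indexed by $X\ni i$ contribute, each of degree $|L|$, so $T$ has degree at most $|L|\deg_{\mathcal{C}}(i)\leq |L|\Delta$ in these variables. Using $v_{A_i}(j)^2=v_{A_i}(j)$, I would reduce to multilinear monomials and write
$$T(A_1,\ldots,A_k)=\sum_{\substack{S\subset[n]\\|S|\leq|L|\Delta}}\Big(\prod_{j\in S}v_{A_i}(j)\Big)\cdot g_{S}\big((A_j)_{j\neq i}\big)$$
for suitable functions $g_S$. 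Since this expresses $T$ as a sum of $\sum_{s=0}^{|L|\Delta}\binom{n}{s}$ rank-$1$ tensors in the $i$-flattening, $\frank_i(T)\leq\sum_{s=0}^{|L|\Delta}\binom{n}{s}$. Taking the maximum over $i$ and combining with the lower bound yields the claimed bound $|\mathcal{F}|\leq (k-1)\sum_{s=0}^{|L|\Delta}\binom{n}{s}$.

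The main technical point is the degree bookkeeping that yields the exponent $|L|\Delta$ rather than the naive $|L|\cdot|\mathcal{C}|$: one must count how many factors of the product over $X\in\mathcal{C}$ actually involve the variable block $v_{A_i}$, which is precisely $\deg_{\mathcal{C}}(i)$. This is where $\Delta(\mathcal{C})$ enters, and it is what makes the bound sensitive to a local rather than global complexity of $\mathcal{C}$. Everything else is a clean combination of Theorem \ref{thm:main} with the standard multilinear counting argument behind the Frankl--Wilson bound.
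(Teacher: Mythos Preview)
Your proposal is correct and follows essentially the same approach as the paper: define the semi-diagonal tensor $T(A_1,\ldots,A_k)=\prod_{X\in\mathcal{C}}\prod_{\ell\in L}(|\bigcap_{i\in X}A_i|-\ell)$, apply Theorem~\ref{thm:main} for the lower bound on $\mfrank(T)$, and bound each $\frank_i(T)$ via the multilinear polynomial representation in the characteristic-vector variables, with the key observation that the degree in the $i$-th block is $|L|\deg_{\mathcal{C}}(i)\le |L|\Delta$. The paper's argument is identical up to notation (it names $h(x)=\prod_{\ell\in L}(x-\ell)$ and records the slightly sharper intermediate bound $\frank_j(T)\le\sum_{s\le |L|\deg_{\mathcal{C}}(j)}\binom{n}{s}$ before taking the maximum).
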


\begin{proof}
     Let $h:\mathbb{F}_{p}\rightarrow\mathbb{F}_{p}$ be the polynomial defined as $h(x)=\prod_{\ell\in L}(x-\ell)$. Define the $k$-dimensional tensor $T:\mathcal{F}^{k}\rightarrow \mathbb{F}_{p}$ as follows. For $A_1,\dots,A_{k}$, let $$T(A_1,\dots,A_{k})=\prod_{X\in \mathcal{C}}h\left(\left|\bigcap_{i\in X}A_{i}\right|\right).$$
     Then $T$ is semi-diagonal as $\mathcal{F}$ is $(\mathcal{C},L)$-satisfying. Therefore, by Theorem \ref{thm:main}, we have $$\mfrank(T)\geq \frac{|\mathcal{F}|}{k-1}.$$
     
     We show that for $j\in [k]$, the $j$-flattening rank of $T$ is at most $|\mathcal{F}|\leq \sum_{s=0}^{d_j}\binom{n}{s},$ where $d_{j}=\mbox{deg}_{\mathcal{C}}(j)|L|$. For ease of notation, let us show this for $j=1$, it follows for the other values of $j$ by the same reasoning.
     
     For $A\in \mathcal{F}$, let $v_{A}\in \mathbb{F}_p^{n}$ be the characteristic vector of $A$. Let $A_1,\dots,A_k\in\mathcal{F}$, then 
     $$T(A_{1},\dots,A_{k})=\prod_{X\in\mathcal{C}}h\left(\sum_{j=1}^{n}\prod_{i\in X}v_{A_{i}}(j)\right).$$
     Let $p:\mathbb{F}_p^{kn}\rightarrow \mathbb{F}_{p}$ be the polynomial defined as 
     $$p(v_{1},\dots,v_{k})=\prod_{X\in\mathcal{C}}h\left(\sum_{j=1}^{n}\prod_{i\in X}v_{i}(j)\right),$$
     where $v_1,\dots,v_{k}\in \mathbb{F}^{n}$. 
    Write $p$ as the sum of monomials and in each monomial replace $v_i(j)^a, a \geq 1$ by $v_i(j)$.
    Let $q$ be the resulting polynomial and note that $q(v_1,\dots,v_{k})=p(v_1,\dots,v_k)$ if  $v_{1},\dots,v_{k}$ are characteristic vectors, having all their coordinates equal $0$ or $1$. 
     
     The polynomial $q(v_1,\dots,v_k)$ can be written as the sum of polynomials of the form $$\beta_{J}\left(\prod_{j=1}^{n}v_{1}(j)^{J(j)}\right)q_{J}(v_2,\dots,v_{k}),$$ where $J\in \{0,1\}^{n}$, $\beta_{J}\in\mathbb{F}_p$ and $q_{J}:\mathbb{F}_p^{(k-1)n}\rightarrow\mathbb{F}_p$ is some polynomial. Note that $\beta_{J}=0$ unless $|J|\leq |L|\mbox{deg}_{\mathcal{C}}(1)$. Let $\mathcal{J}=\{J\in\{0,1\}^{n}:|J|\leq |L|\mbox{deg}_{\mathcal{C}}(1)\}$. For $J\in\mathcal{J}$, define the functions $f_{J}:\mathcal{F}\rightarrow\mathbb{F}_{p}$ and $g_{J}:\mathcal{F}^{k-1}\rightarrow\mathbb{F}_{p}$ as
     $$f_{J}(A_{1})=\beta_{J}\prod_{j=1}^{k}v_{A_1}(j)^{J(j)},$$
     and
     $$g_{J}(A_2,\dots,A_{k})=q_{J}(v_{A_2},\dots,v_{A_k}).$$
     Then 
     $$T(A_1,\dots,A_{k})=\sum_{J\in\mathcal{J}}f_{J}(A_1)g_{J}(A_2,\dots,A_k),$$
     which proves that $\frank_{1}(T)\leq |\mathcal{J}|\leq \sum_{s=0}^{d_{1}}\binom{n}{s}$. As the corresponding bound holds for the $j$-flattening as well for $j\in [k]$, we get 
     $$\mfrank(T)\leq \sum_{s=0}^{\Delta(\mathcal{C})|L|}\binom{n}{s}.$$
     Comparing the lower and upper bound on the max-flattening rank, we get the desired bound $$|\mathcal{F}|\leq (k-1)\sum_{s=0}^{\Delta(\mathcal{C})|L|}\binom{n}{s}.$$
\end{proof}

So far we showed that for a fixed configuration $(\mathcal{C},L)$ of order $k$ modulo $p$, the maximal size of a $(\mathcal{C},L)$-satisfying family of subsets of $[n]$ is of order at most $n^{\Delta |L|}$, where $\Delta$ is the maximum degree of the set family $\mathcal{C}$. For $\Delta=1$ the exponent of $n$ is clearly best possible, as the Frankl-Wilson bound is sharp. On the other hand, for $\Delta\geq 2$ we do not know how accurate our result is.
Nevertheless, we can show that the exponent of $n$ must depend on $\Delta$. 

Indeed, consider the case $p = 2$, $\mathcal{C}_k=[k]^{(2)}$ is a complete graph of order $k$ and $L=\{0\}$. Then $\Delta=k-1$ and we construct families in $2^{[n]}$ which are $(\mathcal{C}_k,\{0\})$-satisfying and have size $n^{\Omega(\log k/\log\log k)}$. Our construction is a modification of an argument of Alon and Szegedy \cite{ASz99}.

\begin{theorem}
Let $t,s$ be positive integers, $k = \lfloor \frac{2^{t+1}}{t-1} \rfloor$, and $n = t^s$. Then, there exists a family $\mathcal{F} \subseteq 2^{[n]}$ which is $(\mathcal{C}_k,\{0\})$-satisfying and has size at least $2^{(t-1)s/4}$.
\end{theorem}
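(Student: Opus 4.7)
The plan is to realize $\mathcal{F}$ as a random subfamily of a product family on $[n]=[t]^s$. Let $V=\{A\subseteq[t]:|A|\text{ odd}\}$, so $|V|=2^{t-1}$, and let $\mathcal{F}_0=\{A_1\times\cdots\times A_s:A_i\in V\}\subseteq 2^{[t]^s}$ be the collection of Cartesian products. Then $|\mathcal{F}_0|=2^{(t-1)s}$, every set has odd size (a product of odd factors), and two products have odd intersection iff every coordinate-wise intersection $|A_i\cap A'_i|$ is odd. Hence the odd-intersection graph on $\mathcal{F}_0$ is precisely the $s$-fold tensor power of the graph $H$ on $V$ with $A\sim_H B$ iff $|A\cap B|$ is odd.

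The first step would be to bound $\omega(H)\leq 2^{\lfloor(t-1)/2\rfloor}$ by a standard Gram-matrix argument: the characteristic vectors $v_1,\dots,v_m\in\mathbb{F}_2^t$ of a clique in $H$ have pairwise inner products equal to $1$, so the shifts $u_i:=v_i+v_1$ for $i\geq 2$ are $m-1$ distinct nonzero, pairwise orthogonal and self-orthogonal vectors lying in a totally isotropic subspace $W$, while $v_1\in W^\perp$ must have odd weight. The latter forces $\mathbf{1}\notin W$, and a short computation then caps $\dim W$ at $\lfloor(t-1)/2\rfloor$. Since clique numbers multiply in tensor powers of reflexive graphs, $\omega(H^{\otimes s})=\omega(H)^s$, and every $k$-clique of $\mathcal{F}_0$ projects coordinate-wise to a clique of $H$ of size at most $\omega(H)$.

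Next, I would include each element of $\mathcal{F}_0$ independently with probability $p=4\cdot 2^{-3(t-1)s/4}$, giving $\mathbb{E}|\mathcal{F}|=4\cdot 2^{(t-1)s/4}$. A $k$-clique of $\mathcal{F}_0$ is determined by a tuple $(T_1,\dots,T_s)$ of cliques of $H$ and a choice of $k$ elements in $T_1\times\cdots\times T_s$, so the number of $k$-cliques is at most $\bigl(|V|^{\omega(H)}\bigr)^s\,\omega(H)^{sk}/k!$. Multiplying by $p^k$ and applying Stirling, the $\log_2$ of the expected number of surviving bad cliques is at most
\[
s\Bigl[(t-1)\omega(H)+k\log_2\omega(H)-\tfrac{3}{4}(t-1)k\Bigr]+O(k).
\]
Using $\omega(H)\leq 2^{(t-1)/2}$ and $k\geq 2^{t+1}/(t-1)-1$ gives $k\log_2\omega(H)\leq \tfrac{1}{2}(t-1)k$, so the bracket is at most $(t-1)(\omega(H)-k/4)\leq 0$. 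Hence the expected number of bad cliques is $O(1)$, and combined with Chernoff concentration on $|\mathcal{F}|$ one can, with positive probability, delete one set per surviving bad clique and still have $|\mathcal{F}|\geq 2^{(t-1)s/4}$.

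The main obstacle will be the arithmetic bookkeeping, which is tightest when $t$ is small: in the marginal cases $t\in\{3,5\}$ the upper bound $2^{(t-1)/2}$ on $\omega(H)$ is not attained and one uses the exact clique number together with a sharper count of $k$-cliques, while for the smallest values of $s$ one has $\omega(H)^s<k$ so that $\mathcal{F}_0$ itself is already $(\mathcal{C}_k,\{0\})$-satisfying and no subsampling is needed. Modulo this case analysis, the construction is the Alon--Szegedy tensor-product paradigm, with the isotropic-subspace clique bound on $H$ playing the role of their near-orthogonality input.
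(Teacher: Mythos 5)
Your proposal is essentially the paper's own argument: the same Alon--Szegedy-style product family of odd subsets on $[n]=[t]^s$, the same ``Oddtown variant'' bound $2^{(t-1)/2}$ on subfamilies of odd sets with pairwise odd intersections (which the paper simply cites and you prove via isotropic subspaces), and a random sparse subfamily analyzed by a first-moment computation -- the paper samples a fixed number of sets with repetition and union-bounds over bad boxes instead of Bernoulli sampling plus deletion, a cosmetic difference. One aside is incorrect but harmless: for $t=3$ and $t=5$ the bound $2^{(t-1)/2}$ \emph{is} attained (e.g.\ $\{1\},\{1,2,3\}$ for $t=3$), so no sharper clique number is available there; fortunately your estimate only needs the bracketed exponent to be $\leq 0$, which holds with equality in those cases, and the remaining $2k-\log_2 k!$ term is still $O(1)$, so the deletion step goes through as you describe.
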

\begin{proof}
Let $\mathcal{G} \subseteq 2^{[t]}$ be a family of odd-sized sets all whose pairwise intersections have also odd size. By the well known variation of the Oddtown problem
we have that $|\mathcal{G}| \leq 2^{(t-1)/2}$. 

Now, let $\mathcal{O}_t \subseteq 2^{[t]}$ denote the family of all odd-sized subsets of $[t]$, which clearly has size $2^{t-1}$. Since $n = t^s$, we can identify $[n]$ with the set $[t]^s$. Let then $\mathcal{O}^s_t \subseteq 2^{[n]}$ denote the family of sets of the form $A_1 \times \ldots \times A_s$, where $A_i \in \mathcal{O}_t$ for all $i$. Note that all sets in $\mathcal{O}^s_t$ are odd-sized. 
If $\mathcal{F}_1, \ldots, \mathcal{F}_s$ are subsets of $\mathcal{O}_t$ with none of $\mathcal{F}_i$ containing a pair of sets with even-sized intersection, then we call the collection 
$$\mathcal{F}_1 \times \ldots \times \mathcal{F}_s := \{A_1 \times \ldots \times A_s : A_i \in \mathcal{F}_i\} \subseteq \mathcal{O}^s_t$$ 
a \emph{bad box}. As we explained above, in this case $|\mathcal{F}_i|\leq 2^{(t-1)/2}$, and therefore every bad box contains at most $2^{s(t-1)/2}$ sets. 
Note that the intersection of $A_1\times\dots \times A_s$ and $B_1\times\dots\times B_s$ is $(A_1\cap B_1)\times\dots\times (A_s\cap B_s)$. 
Therefore, $\mathcal{F}_1\times\dots\times \mathcal{F}_s$ is a bad box if and only if it contains no two sets with even intersection.

Take $\mathcal{F}$ to be a random family given by choosing uniformly and independently, with repetition, $\lceil 2^{(t-1)s/4} \rceil$ sets in $\mathcal{O}^s_t$. Then, for every bad box $\mathcal{B}$, the probability that at least $k$ elements of $\mathcal{F}$ are contained in $\mathcal{B}$ is at most $\binom{|\mathcal{F}|}{k}\left(\frac{|\mathcal{B}|}{|\mathcal{O}^s_t|} \right)^k \leq  2^{-(t-1)sk/4}$. The number of bad boxes can be upper bounded by $2^{s2^{t-1}}$. Thus, using
our choices of $k$ and $t$, we conclude that the probability that some bad box contains $k$ elements of $\mathcal{F}$ is at most
     $$2^{s2^{t-1}}  \cdot 2^{-(t-1)sk/4} \leq 1\,.$$
To finish, note this implies that $\mathcal{F} \subseteq 2^{[n]} $ is $(\mathcal{C}_k, \{0\})$-satisfying. Indeed, suppose this is not the case. Since each member of $\mathcal{O}^s_t$ has odd size, there exist $k$ sets $S_1, \ldots, S_k \in \mathcal{F}$ such that for all distinct $p,q$, we have $|S_p \cap S_q| = 1 \text{ (mod 2)}$. Let the collection $\{A^{(i)}_j: 1 \leq i \leq k, 1 \leq j \leq s \}$ be such that $S_i = A^{(i)}_1 \times \ldots \times A^{(i)}_s$ for all $i$ and for each $j$, let $\mathcal{F}_j = \{A^{(i)}_j: 1 \leq i \leq k \} \subseteq \mathcal{O}_t$. Since the size of $S_p \cap S_q$ is the product of the sizes of the intersections $A^{(p)}_j \cap A^{(q)}_j$, we must have that each collection $\mathcal{F}_j$ has only odd-sized pairwise intersections. Hence, $\mathcal{F}_1 \times \ldots \times \mathcal{F}_s$ is a bad box and contains the sets $S_1, \ldots, S_k$. This is a contradiction, since no bad box contains $k$ members of $\mathcal{F}$.
\end{proof}

\noindent 
As $\Delta=k-1$ and $t=\Theta( \log k)$, indeed, the family provided by the previous theorem has size $n^{\Omega(\log \Delta/\log\log\Delta)}$. It would be interesting to improve this result and get a better understanding of how much the exponent of $n$ should depend on $\Delta$.

\subsection{Rainbow matchings}

Let $\mathcal{H}$ be an $r$-uniform multi-hypergraph (that is, we allow repetitions of the edges). Given a coloring $c:E(\mathcal{H})\rightarrow [z]$, a rainbow matching in $\mathcal{H}$ is a matching in which no two edges have the same color. The hypergraph $\mathcal{H}$ is \emph{$(z,t)$-colored} if it is colored with $z$ colors, and each colorclass is a matching of size $t$. Let $f(r,t)$ denote the maximal $z$ such that there exists a $(z,t)$-colored $r$-partite $r$-uniform multi-hypergraph which contains no rainbow matching of size $t$. Also, let $F(r,t)$ denote the maximal $z$ such that there exists a $(z,t)$-colored $r$-uniform multi-hypergraph which contains no rainbow matching of size $t$. Clearly, $f(r,t)\leq F(r,t)$. Aharoni and Berger \cite{AB09} proved that $f(r,t)\geq (t-1)2^{r}$, and equality holds if $r=2$ or $t=2$. They also conjectured that $f(r,t)=(t-1)2^{r}$ holds in general. This was disproved by Alon \cite{A11}, who showed that $f(r,3)\geq 2.71^{r}$. More precisely, Alon discovered a connection between $f(r,t)$ and the following well studied function. Let $g(r,t)$ denote the smallest integer $g$ such that any sequence of $g$ elements of the Abelian group $\mathbb{Z}_{t}^{r}$ contains a subsequence of length $t$, whose elements sum up to zero. Then  $f(r,t)\geq g(r-1,t)-1$.

On the other hand, Glebov, Sudakov and Szab\'o \cite{GSSz14} proved, using combinatorial techniques, that $$F(r,t)\leq \min\{(r+1)^{2r+1}t^{2r+1},8^{rt}\}.$$
Our next theorem improves this upper bound for every $(r,t)$ satisfying $r,t\geq 3$, which also improves all known upper bounds for $f(r,t)$ as well.
 
 \begin{theorem}\label{thm:rainbow}
 	$F(r,t)\leq (t-1)\binom{rt}{r}.$
 \end{theorem}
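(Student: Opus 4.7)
The plan is to build a semi-diagonal $t$-dimensional tensor on $[z]^t$ whose diagonal is nonzero and whose "all distinct" entries vanish by the hypothesis of no rainbow matching of size $t$, apply Theorem~\ref{thm:main} for a lower bound on $\mfrank$, and pair it with an independent upper bound of $\binom{rt}{r}$ via an analysis of the flattenings. For each color $c \in [z]$ I fix an arbitrary ordering $M_c = \{e_{c,1},\ldots,e_{c,t}\}$ of its matching and define $T \colon [z]^t \to \mathbb{Q}$ by
$$T(c_1,\ldots,c_t) = \sum_{(j_1,\ldots,j_t) \in [t]^t} \mathbf{1}\bigl[e_{c_1,j_1},\ldots,e_{c_t,j_t} \text{ are pairwise disjoint}\bigr].$$
If $c_1=\cdots=c_t=c$, pairwise disjointness forces $(j_1,\ldots,j_t)$ to be a permutation (otherwise two indices select the same edge, which is not disjoint from itself), and any permutation yields the matching $M_c$; hence $T(c,\ldots,c)=t!\neq 0$ in $\mathbb{Q}$. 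If $c_1,\ldots,c_t$ are pairwise distinct, a nonzero summand would give a rainbow matching of size $t$, contradicting the hypothesis. So $T$ is semi-diagonal, and Theorem~\ref{thm:main} gives $\mfrank(T) \geq z/(t-1)$.

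The core of the proof is the upper bound $\mfrank(T) \leq \binom{rt}{r}$; by the symmetric roles of the $t$ coordinates it suffices to bound the $1$-flattening rank. Writing
$$T(c_1,c_2,\ldots,c_t) = \sum_{e \in M_{c_1}} g_e(c_2,\ldots,c_t), \qquad g_e(c_2,\ldots,c_t) = \sum_{(j_2,\ldots,j_t)} \mathbf{1}\bigl[e,e_{c_2,j_2},\ldots,e_{c_t,j_t} \text{ pairwise disjoint}\bigr],$$
each row at $c_1$ is a sum of $t$ functions $g_e$ for $e$ an $r$-subset of the $rt$-vertex set $V(M_{c_1})$, so it lies in the $\binom{rt}{r}$-dimensional space spanned by $\{g_R : R \in \binom{V(M_{c_1})}{r}\}$. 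To convert this per-$c_1$ observation into a color-independent decomposition $T(c_1,\vec{c}) = \sum_{R \in \binom{[rt]}{r}} \phi_R(c_1)\psi_R(\vec{c})$, I would fix a canonical bijection $\lambda_c \colon [rt] \to V(M_c)$ for every color (by listing edges of $M_c$ in the chosen order and then the vertices within each edge) and apply the polynomial-method identity
$$\mathbf{1}[e \cap e' = \emptyset] = \sum_{W \subseteq e}(-1)^{|W|}\mathbf{1}[W \subseteq e']$$
to each of the $t-1$ pairwise disjointness conditions involving $e = e_{c_1,j_1}$. A Möbius-style reorganisation along $\lambda_{c_1}$ should then re-parameterise the expansion by abstract $r$-subsets $R \in \binom{[rt]}{r}$, yielding the sought $\binom{rt}{r}$-term decomposition.

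The main obstacle is precisely this reorganisation: ensuring that the pieces $\psi_R$ are genuinely independent of $c_1$. The naive expansion produces functions indexed by unions $\bigcup_{i>1} W_{1,i} \subseteq e_{c_1,j_1}$, and the delicate point is to identify each such union with an abstract $r$-subset of $[rt]$ via $\lambda_{c_1}$ in such a way that the remaining dependence on $(c_2,\ldots,c_t)$ no longer secretly references $c_1$. Once this is carried out, combining the two estimates gives $z/(t-1) \leq \mfrank(T) \leq \binom{rt}{r}$, hence $z \leq (t-1)\binom{rt}{r}$, as required.
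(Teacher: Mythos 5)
Your construction of the tensor and the lower bound are fine, but the heart of the proof --- the claim $\mfrank(T)\leq \binom{rt}{r}$ for your counting tensor --- is not just the ``delicate point'' you flag; it is false in general, so no reorganisation along the bijections $\lambda_{c}$ can succeed. Take $r=1$, $t=2$, and let the colour classes $M_1,\dots,M_z$ be pairwise disjoint pairs of vertices. Then your tensor is the matrix $T(c_1,c_2)=4-|M_{c_1}\cap M_{c_2}|=4J-2I$ (with $J$ the all-ones matrix), whose rank over $\mathbb{Q}$ is $z$, far exceeding $\binom{rt}{r}=2$. Your upper-bound argument is purely structural (it never invokes the absence of a rainbow matching), so this example kills it. The underlying reason is visible in your own expansion: after applying $\mathbf{1}[e\cap e'=\emptyset]=\sum_{W\subseteq e}(-1)^{|W|}\mathbf{1}[W\subseteq e']$, the column factors $\prod_{i\geq 2}\mathbf{1}[W_i\subseteq e_{c_i,j_i}]$ depend on the \emph{actual vertices} in $W_i\subseteq e_{c_1,j_1}$, hence on $c_1$; relabelling the $W_i$ by abstract elements of $[rt]$ via $\lambda_{c_1}$ does not remove this dependence, and over $\mathbb{Q}$ disjointness-indicator matrices genuinely have rank growing with the ground set, not with $rt$.

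The missing idea, which is how the paper proceeds, is an algebraic encoding that compresses all vertex-level information into a space of dimension depending only on $n=rt$. The paper assigns to each vertex $x$ a vector $v_x$ in general position in an $n$-dimensional space $V$ over a field of characteristic $2$, sets $w(A)=\bigwedge_{x\in A}v_x\in\bigwedge^{r}V$, and defines $T(i_1,\dots,i_t)=w(A_{i_1,1})\wedge\dots\wedge w(A_{i_t,t})\in\bigwedge^{n}V\cong\mathbb{F}$. General position makes $T(i_1,\dots,i_t)\neq 0$ exactly when the chosen edges are pairwise disjoint, so semi-diagonality follows as in your argument; but now the upper bound is automatic: expanding $w(A_{i_1,1})=\sum_{I\in[n]^{(r)}}\lambda(A_{i_1,1},I)e_I$ in the fixed basis of $\bigwedge^r V$ and using multilinearity of $\wedge$ yields $T(i_1,\dots,i_t)=\sum_{I\in[n]^{(r)}}f_I(i_1)g_I(i_2,\dots,i_t)$ with $g_I$ manifestly independent of $i_1$, giving $\frank_1(T)\leq\binom{rt}{r}$ unconditionally. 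If you want to salvage your proposal, you must replace the $\mathbb{Q}$-valued indicator sum by such a gadget (exterior algebra, or an equivalent generic determinant/scalar-product construction); the combinatorial counting tensor itself does not have small flattening rank.
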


The proof is based on the exterior algebra method. The interested reader can find a detailed description of this method as well as various applications in Chapter 6 of the book by Babai and Frankl \cite{BF88}. Here, let us  only give a basic introduction to exterior algebras. 

Let $V$ be a vector space over some field $\mathbb{F}$. The \emph{exterior algebra $\bigwedge V$} is the associative algebra generated by the elements of $V$ and the associative binary operation $\wedge$, called \emph{wedge product (or exterior product)}. Subject to these, $\wedge$ has the additional property that $v\wedge v=0$ for all $v\in V$. Also, the \emph{$k$-th exterior power of $k$}, denoted by $\wedge^{k} V$, is the vector space generated by the elements $v_1\wedge\dots\wedge v_k$, where $v_1,\dots,v_k\in V$. Let us list some of the well known properties of the wedge product, $\bigwedge^k V$ and $\bigwedge V$.

\begin{enumerate}
	\item ($\bigwedge V$ is an associative algebra.) If $a,b,c\in \bigwedge V$ and $\lambda\in \mathbb{F}$, then \begin{align*}&(a\wedge b)\wedge c=a\wedge(b\wedge c),\\
	&a\wedge (\lambda b)=(\lambda a)\wedge b=\lambda(a\wedge b),\\
	&a\wedge (b+c)=(a\wedge b)+(a\wedge c),\\
	&(a+b)\wedge c=(a\wedge c)+(b\wedge c).
	\end{align*}
    \item If $v,w\in V$, then $v\wedge w=-w\wedge v$.
    \item If $v_1,\dots,v_k$, then $v_1\wedge\dots\wedge v_k\neq 0$ if and only if $v_1,\dots,v_k$ are linearly independent.
    \item If $\mbox{dim}(V)=n$, then $\mbox{dim}(\bigwedge^{k}V)=\binom{n}{k}.$ Moreover, if $e_1,\dots,e_n$ is a basis of $V$, then $$\{e_{i_1}\wedge\dots\wedge e_{i_k}\}_{1\leq i_1<\dots<i_k\leq n}$$ is a basis of $\bigwedge^k V$.
\end{enumerate}

If $\mathbb{F}$ has characteristic 2, then $\wedge$ is also commutative by property 2. Therefore, in this case, if $A=\{a_1,\dots,a_k\}\subset V$, we can write $\bigwedge_{a\in A}a$ instead of $a_1\wedge\dots\wedge a_k$ without specifying the order of terms.

\begin{proof}[Proof of Theorem \ref{thm:rainbow}]
	 Let $n=rt$. Let $\mathcal{H}$ be an $r$-uniform multi-hypergraph with $(z,t)$-coloring $c$. Let $\mathbb{F}$ be an infinite field of characteristic 2. Let $V$ be an $n$-dimensional vector space over $\mathbb{F}$, and for every $x\in V(\mathcal{H})$, choose a vector $v_x\in V$ such that these vectors are in general position, i.e., any $n$ of them are linearly independent. For every $r$-tuple $A\subset V(\mathcal{H})$, let $w(A)=\bigwedge_{x\in A}v_x\in\bigwedge^{r}V$.  Also, let $e_1,\dots,e_{n}$ be a basis of $V$.
	
	For $i\in [z]$, let $A_{i,1},\dots,A_{i,t}$ be the edges of color $i$. Define the $t$-dimensional tensor $T:[z]^t\rightarrow\mathbb{F}$ as follows. With slight abuse of notation, let 
	$$T(i_1,\dots,i_t)=w(A_{i_1,1})\wedge\dots\wedge w(A_{i_t,t}).$$
	To be more precise, $w(A_{i_1,1})\wedge\dots\wedge w(A_{i_t,t})\in \bigwedge^{n}V$. But each $a\in\bigwedge^{n}V$ can be written as $a=\lambda (e_1\wedge\dots\wedge e_{n})$ for some $\lambda\in\mathbb{F}$, so we can identify $a$ with this $\lambda$.
	
	Note that as $\{v_x\}_{x\in V(\mathcal{H})}$ are in general position, $T(i_1,\dots,i_t)\neq 0$ if and only if $A_{i_1,1},\dots,A_{i_t,t}$ are pairwise disjoint. But then, as $\mathcal{H}$ contains no rainbow matching of size $t$, we get that $T$ is semi-diagonal. By Theorem \ref{thm:main}, this gives $$\mfrank(T)\geq \frac{z}{t-1}.$$
	
	We finish the proof by showing that $\frank_{\ell}(T)\leq \binom{n}{r}$ for every $\ell\in [t]$. For ease of notation, we show this for $\ell=1$, the other cases follow by the same argument. For $I\subset [n]$, let $e_I=\bigwedge_{i\in I}e_i$. Then $\{e_{I}\}_{I\in [n]^{(r)}}$ is a basis of $\bigwedge^{r}V$, so for every $A\subset V(\mathcal{H})$ and $I\in [n]^{(r)}$ there exists $\lambda(A,I)$ such that $$w(A)=\sum_{I\in [n]^{(r)}}\lambda(A,I)e_{I}.$$ But then
	$$T(i_1,\dots,i_t)=\bigwedge_{j=1}^{t}\left(\sum_{I\in [n]^{(r)}}\lambda(A_{i_j,j},I)e_{I}\right)=\sum_{\substack{I_1,\dots,I_t\in [n]^{(r)}\\ I_1\cup\dots\cup I_t=[n]}}\lambda(A_{i_1,1},I_1)\dots\lambda (A_{i_t,t},I_t).$$
	For $I\in [n]^{(r)}$, define the functions $f_{I}:[z]\rightarrow \mathbb{F}$ and $g_I:[z]^{t-1}\rightarrow \mathbb{F}$ as follows. Let
	$$f_{I}(i_1)=\lambda(A_{i_1,1},I),$$
	and
	$$g_{I}(i_2,\dots,i_t)=\sum_{\substack{I_2,\dots,I_t\in [n]^{(r)}\\ I_2\cup\dots\cup I_t=[n]\setminus I}}\lambda(A_{i_2,2},I_2)\dots\lambda (A_{i_t,t},I_t).$$
	Then $$T(i_1,\dots,i_t)=\sum_{I\in [n]^{(r)}}f_{I}(i_1)g_{I}(i_2,\dots,i_t),$$ which shows that $\frank_1(T)\leq \binom{n}{r}$. As this holds for $\frank_{\ell}(T)$ as well for every $\ell\in [t]$, we get $$\mfrank(T)\leq \binom{n}{r}.$$ 
		Comparing the lower and upper bound on the max-flattening rank of $T$, we deduce that $z\leq (t-1)\binom{n}{t}$, finishing the proof.
\end{proof}

In particular, one can slightly modify our proof to show the following extension of the Bollob\'as set pair inequality \cite{B65}, which might be of independent interest.

\begin{theorem}
Let $\mathcal{A}$ be a family of $t$-tuples of subsets of some base set $X$ such that $|A(i)|=r_i$ for every $A\in\mathcal{A}$ and $i\in [t]$. Suppose that $A(1),\dots,A(t)$ are pairwise disjoint for every $A\in\mathcal{A}$, but  $A_1(1),\dots,A_t(t)$ are not pairwise disjoint if $A_1,\dots,A_t\in\mathcal{A}$ are all distinct. Then $$|\mathcal{A}|\leq (t-1)\max_{i\in [t]}\binom{r_1+\dots+r_t}{r_i}.$$
\end{theorem}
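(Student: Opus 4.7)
The plan is to adapt the exterior algebra argument from the proof of Theorem~\ref{thm:rainbow}. Set $n=r_1+\dots+r_t$, let $\mathbb{F}$ be an infinite field of characteristic $2$, and let $V$ be an $n$-dimensional vector space over $\mathbb{F}$ with basis $e_1,\dots,e_n$. For each $x\in X$ I would choose a vector $v_x\in V$ so that the collection $\{v_x\}_{x\in X}$ is in general position, i.e.\ any $n$ of them are linearly independent. For a subset $B\subset X$ of size $r_i$, set $w(B)=\bigwedge_{x\in B}v_x\in\bigwedge^{r_i}V$, which is well-defined up to sign (and unambiguously defined in characteristic $2$).

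I would then introduce the $t$-dimensional tensor $T:\mathcal{A}^t\to\mathbb{F}$ by
$$T(A_1,\dots,A_t)=w(A_1(1))\wedge w(A_2(2))\wedge\dots\wedge w(A_t(t)),$$
where the right-hand side lies in $\bigwedge^n V$, a one-dimensional space that I identify with $\mathbb{F}$ via the basis element $e_1\wedge\dots\wedge e_n$. Semi-diagonality of $T$ is immediate from the hypotheses. On the diagonal $A_1=\dots=A_t=A$, the sets $A(1),\dots,A(t)$ are pairwise disjoint with total size $n$, so the $n$ vectors appearing in the wedge are in general position and hence linearly independent, making the product nonzero. If instead $A_1,\dots,A_t\in\mathcal{A}$ are all distinct, the hypothesis gives indices $j\neq k$ and an element $x\in A_j(j)\cap A_k(k)$, so $v_x$ appears as a factor in both $w(A_j(j))$ and $w(A_k(k))$, forcing the entire wedge product to vanish. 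Theorem~\ref{thm:main} then yields $\mfrank(T)\geq |\mathcal{A}|/(t-1)$.

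For the matching upper bound on $\frank_i(T)$, I would mimic the expansion in the proof of Theorem~\ref{thm:rainbow}. Writing $e_I=\bigwedge_{j\in I}e_j$ for $I\in[n]^{(r_i)}$, we can expand $w(A_i(i))=\sum_{I\in [n]^{(r_i)}}\lambda(A_i(i),I)\,e_I$ and isolate the $A_i$-dependence, obtaining
$$T(A_1,\dots,A_t)=\sum_{I\in[n]^{(r_i)}}f_I(A_i)\,g_I(A_1,\dots,A_{i-1},A_{i+1},\dots,A_t)$$
for suitable functions $f_I,g_I$, where the sum is effectively restricted to $I$ such that the remaining indices can partition $[n]\setminus I$ into blocks of sizes $r_1,\dots,r_{i-1},r_{i+1},\dots,r_t$. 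This shows $\frank_i(T)\leq\binom{n}{r_i}$ for each $i\in[t]$, hence $\mfrank(T)\leq \max_i\binom{n}{r_i}$, and combining with the lower bound finishes the proof.

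The argument is a near-verbatim adaptation of the proof of Theorem~\ref{thm:rainbow}, with the uniform parameter $r$ replaced by the index-dependent $r_i$ and $rt$ replaced by $r_1+\dots+r_t$. I do not expect any serious obstacle; the only step deserving a moment's thought is verifying semi-diagonality under the present Bollob\'as-type hypothesis, but this reduces to the observation that a single shared element $x\in A_j(j)\cap A_k(k)$ already contributes a repeated $v_x$ factor and kills the wedge product.
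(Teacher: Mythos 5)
Your proposal is correct and is precisely the ``slight modification'' of the exterior-algebra proof of Theorem~\ref{thm:rainbow} that the paper itself indicates for this statement: same tensor $T(A_1,\dots,A_t)=w(A_1(1))\wedge\dots\wedge w(A_t(t))$ in $\bigwedge^n V$ with $n=r_1+\dots+r_t$, same semi-diagonality check, and the same basis expansion giving $\frank_i(T)\leq\binom{n}{r_i}$. Nothing further is needed.
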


\vspace{0.3cm}
\noindent	
{\bf Acknowledgements.} All authors were supported by the SNSF grant 200021\_196965. Istv\'an Tomon also acknowledges the support of Russian Government in the framework of MegaGrant no 075-15-2019-1926, and the support of MIPT Moscow.

\end{document}